\numberwithin{equation}{section}
\newtheorem{theorem}{Theorem}[section]
\newtheorem{lemma}[theorem]{Lemma}
\newtheorem{proposition}[theorem]{Proposition}
\newtheorem{remark}[theorem]{Remark}
\newtheorem{openquestion}[theorem]{Open Question}
\numberwithin{equation}{section}
\newcommand{\norm}[1]{\left\lVert#1\right\rVert}
\def\XXint#1#2#3{{\setbox0=\hbox{$#1{#2#3}{\int}$ }
\vcenter{\hbox{$#2#3$ }}\kern-.6\wd0}}
\def\dashint{\Xint-}
\newcommand{\bv}{\operatorname{BV}}
\renewcommand{\geq}{\geqslant}
\newcommand{\di}{\operatorname{div}}
\newcommand{\dif}{\operatorname{d}\!}
\newcommand{\R}{\mathbb{R}}
\newcommand{\locc}{\operatorname{loc}}
\newcommand{\sym}{\operatorname{sym}}
\newcommand{\ball}{\operatorname{B}}
\newcommand{\sobo}{\operatorname{W}}
\newcommand{\lebe}{\operatorname{L}}
\newcommand{\hold}{\operatorname{C}}
\newcommand{\curl}{\operatorname{curl}}
\renewcommand{\leq}{\leqslant}
\newcommand{\sg}{\varepsilon}
\newcommand{\bmo}{\operatorname{BMO}}
\newcommand{\A}{\mathbb{A}}
\newcommand{\besov}{\operatorname{B}}
\newcommand{\scew}{\mathrm{skew}}
\renewcommand{\dashint}{\fint}
\begin{document}

\title[Korn-Maxwell-Sobolev Inequalities]{On Korn-Maxwell-Sobolev Inequalities}

\thanks{}
\author[Gmeineder]{Franz Gmeineder}
\address{F.G.: Mathematisches Institut der Universit\"{a}t Bonn, Endenicher Allee 60, 53115 Bonn, Germany. E-mail: \texttt{fgmeined@math.uni-bonn.de}}
\author[Spector]{Daniel Spector}
\address{D.S.: Okinawa Institute of Science and Technology Graduate University, 1919-1 Tancha, Onna-son, Kunigami-gun, Okinawa, Japan 904-0495. E-mail: \texttt{daniel.spector@oist.jp}}
\subjclass[2010]{35J50, 35J93, 49J50}
\keywords{Korn's inequality, Fourier multipliers, fractional integrals, singular integrals, Sobolev inequalities, incompatible tensor fields}
\date{\today}
\thanks{\emph{Acknowledgments.} The first author is grateful to the Hausdorff Centre of Mathematics, Bonn, for financial support. He is moreover indebted to the Okinawa Institute of Science and Technology for financial support that made possible a visit in March 2020, where large parts of this paper were finished.}
\maketitle

\begin{abstract}
We establish a family of inequalities that allow one to estimate the $\lebe^{q}$-norm of a matrix-valued field by the $\lebe^{q}$-norm of an elliptic part and the $\lebe^{p}$-norm of the matrix-valued curl. This particularly extends previous work by \textsc{Neff} et al. and, as a main novelty, is applicable in the regime $p=1$. 
\end{abstract}

\section{Introduction}
\subsection{Korn-Maxwell-type inequalities}
Coercive or \textsc{Korn}-type inequalities are the key ingredient for the treatment of a variety of problems from elasticity or fluid mechanics \cite{Friedrichs,FuchsSeregin,Mosolov}. In its most basic form, the classical Korn inequality asserts that for each $1<p<\infty$ there exists a constant $c=c(p)>0$ such that for all $u\in\hold_{c}^{\infty}(\R^{3};\R^{3})$ there holds 
\begin{align}\label{eq:Korn}
\|Du\|_{\lebe^{p}(\R^{3})}\leq c \|\sg(u)\|_{\lebe^{p}(\R^{3})}, 
\end{align}
where $\sg(u):=\frac{1}{2}(Du+Du^{\top})$ is the symmetric gradient of $u$. As $\sg(u)$ is in general a weaker quantity than the full gradient $Du$, inequalities such as \eqref{eq:Korn} are non-trivial and usually arise as a consequence of singular integral estimates. The latter necessitates the exponent restriction $1<p<\infty$ as \eqref{eq:Korn} fails to hold for $p=1$ by a celebrated counterexample due to \textsc{Ornstein} \cite{Ornstein}.

There are several ways to generalise inequalities of the form \eqref{eq:Korn}. One possibility to do so are the so-called \emph{Korn-Maxwell inequalities} that arise, for example, in the mathematical theory of elasticity or plasticity, respectively. Such inequalities have been considered and studied extensively by \textsc{Neff} and coauthors, \textit{cf.} \cite{Neff3,Neff4,Neff1,Neff}; one form thereof is given by 
\begin{align}\label{eq:KornMaxwellIntro}
\|F\|_{\lebe^{p}(\Omega)}\leq c(\|F^{\sym}\|_{\lebe^{p}(\Omega)}+\|\curl(F)\|_{\lebe^{p}(\Omega)})\;\text{for}\;F\in\hold_{c}^{\infty}(\Omega;\R^{3\times 3})
\end{align}
for open and bounded sets $\Omega\subset\R^{3}$. Here, $F^{\sym}:=\frac{1}{2}(F+F^{\top})$ is the symmetric part of the $\R^{3\times 3}$-valued map $F$ and $\curl(F)$ denotes its row-wise curl. Let us note that if $F\in\hold_{c}^{\infty}(\Omega;\R^{3\times 3})$ with $\curl(F)=0$, then $F$ is a gradient, and so \eqref{eq:KornMaxwellIntro} yields \eqref{eq:Korn}. Based on the zero boundary values of the admissible maps, we refer to \eqref{eq:KornMaxwellIntro} as \emph{Korn-Maxwell inequality of the first kind}. Another relevant variant of \eqref{eq:KornMaxwellIntro} is given by replacing the symmetric part of $F$ on the right-hand side of \eqref{eq:KornMaxwellIntro} by its trace-free or deviatoric part $F^{\mathrm{dev}}:=F^{\sym}-\frac{1}{3}\mathrm{tr}(F)\mathbbm{1}_{3}$, where $\mathbbm{1}_{3}$ denotes the $(3\times 3)$-unit matrix. As established in \cite{Neff4}, the analogue of \eqref{eq:KornMaxwellIntro} reads
\begin{align}\label{eq:KornMaxwellIntroA}
\|F\|_{\lebe^{p}(\Omega)}\leq c(\|F^{\mathrm{dev}}\|_{\lebe^{p}(\Omega)}+\|\curl(F)\|_{\lebe^{p}(\Omega)})\;\text{for}\;F\in\hold_{c}^{\infty}(\Omega;\R^{3\times 3})
\end{align}
for open and bounded sets $\Omega\subset\R^{3}$. The aim of this paper is to provide a common gateway to inequalities of the form \eqref{eq:KornMaxwellIntro} and their natural modifications, the Korn-Maxwell-Sobolev inequalities. This particularly motivates a framework  that is both applicable to more general operators than the symmetric or trace-free gradient \emph{and to the borderline case} $p=1$, a theme that shall now be described in detail.

\subsection{Korn-Maxwell-Sobolev-type inequalities}
We start our discussion by noting that both \eqref{eq:KornMaxwellIntro} and \eqref{eq:KornMaxwellIntroA} cannot hold for $p=1$. This can be seen by taking $F$ to be gradients, $F=\nabla u$, and recalling that e.g. \eqref{eq:Korn} does not extend to $p=1$ by \textsc{Ornstein}'s Non-Inequality \cite{Ornstein,KirchheimKristensen}.  On the other hand, both inequalities \eqref{eq:KornMaxwellIntro} and \eqref{eq:KornMaxwellIntroA} do not involve the requisite exponents that admit suitable scaling. For $1\leq p < 3$ instead, we consider the following inequality: 
\begin{align}\label{eq:SKM1}
\|F\|_{\lebe^{p^{*}}(\Omega)}\leq c(\|F^{\sym}\|_{\lebe^{p^{*}}(\Omega)}+\|\curl(F)\|_{\lebe^{p}(\Omega)})\;\;\;\text{for}\;F\in\hold_{c}^{\infty}(\Omega;\R^{3\times 3}), 
\end{align}
where $p^{*}:=\frac{3p}{3-p}$ is the Sobolev conjugate of $p$. Clearly, if $p=3$ or $p>3$, the $\lebe^{p^{*}}$-norms should be replaced by the $\mathrm{BMO}$- or corresponding $\hold^{0,1-3/p}$-H\"{o}lder (semi)norms, respectively. Postponing the incorporation of other function spaces, we now set up the framework for the main results of the paper. To this end, let $\A$ be a linear, homogeneous differential operator of order one on $\R^{3}$ from $\R^{3}$ to some $\R^{N}$. By this we understand that $\A$ has a representation 
\begin{align}\label{eq:form}
\A u :=\sum_{j\in\{1,2,3\}}\A_{j}\partial_{j}u,\qquad u\colon\R^{3}\to\R^{3}
\end{align}
with fixed linear maps $\A_{j}\colon\R^{3}\to\R^{N}$. 
Following \textsc{H\"{o}rmander} and \textsc{Spencer}  \cite{Hoermander,Spencer}, we call $\A$ \emph{elliptic} provided for each $\xi\in\R^{3}\setminus\{0\}$ the symbol map 
\begin{align}\label{eq:elliptic}
\A[\xi]:=\sum_{j\in\{1,2,3\}}\xi_{j}\A_{j} \colon \R^{3}\to\R^{N}
\end{align}
is injective. Adopting the viewpoint of \cite{GmeinederRaita,Raita}, $\A u = A[\nabla u]$ for some linear map $A\in\mathscr{L}(\R^{3\times 3};\R^{N})$, and we call $A$ the \emph{matrix representative} of $\A$; equally, every $A\in\mathscr{L}(\R^{3\times 3};\R^{N})$ induces an operator $\A$ by means of $\A u:=A[\nabla u]$. Our main theorem then is this: 
\begin{theorem}[Korn-Maxwell-Sobolev I]\label{thm:main}
Let $1\leq p<3$ and $N\in\mathbb{N}$. Then the following are equivalent for $A\in\mathscr{L}(\R^{3\times 3};\R^{N})$: 
\begin{enumerate}
\item\label{item:thmmainA} $A$ induces an elliptic operator $\A$ of the form \eqref{eq:form}. 
\item\label{item:thmmainB} There exists a constant $c=c(p,A)>0$ such that 
\begin{align}\label{eq:KornMaxwellAA}
\|F\|_{\lebe^{p^{*}}(\Omega)}\leq c(\|A[F]\|_{\lebe^{p^{*}}(\Omega)}+\|\curl(F)\|_{\lebe^{p}(\Omega)})
\end{align}
holds for all open sets $\Omega\subset\R^{n}$ and $F\in\hold_{c}^{\infty}(\Omega;\R^{3\times 3})$. 
\end{enumerate} 
\end{theorem}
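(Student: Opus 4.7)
The plan is Fourier analytic: invert the joint symbol of the pair $(A,\curl)$ at each frequency, express $F$ as a zero-order Fourier multiplier applied to $A[F]$ plus a multiplier of order $-1$ applied to $\curl F$, and estimate both pieces in $L^{p^*}$. The $p=1$ endpoint will be the serious obstacle, to be handled by exploiting the row-wise divergence-free structure of $\curl F$ via a Bourgain-Brezis/Van Schaftingen type endpoint inequality.

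For necessity, I would argue by contradiction. If $\A$ fails to be elliptic, there exist $\xi_0,v_0\in\R^3\setminus\{0\}$ with $A[v_0\otimes\xi_0]=0$. Picking a nontrivial $\eta\in\ccinfty(\Omega)$ and setting $u_k(x):=k^{-1}\eta(x)\sin(k\,\xi_0\cdot x)v_0$, $F_k:=\nabla u_k$, one obtains $\curl F_k\equiv 0$ and $A[F_k]=\eta(x)\cos(k\,\xi_0\cdot x)A[v_0\otimes\xi_0]+O(k^{-1})=O(k^{-1})$ pointwise, while a Riemann-Lebesgue argument gives $\liminf_k\|F_k\|_{L^{p^*}}>0$. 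Feeding $F_k$ into \eqref{eq:KornMaxwellAA} and letting $k\to\infty$ contradicts the inequality.

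For sufficiency, I extend $F$ by zero to $\R^3$. The Fourier symbol of the row-wise curl at $\xi\neq 0$ has kernel $V_\xi:=\{v\otimes\xi:v\in\R^3\}$, and ellipticity is precisely the statement that $A|_{V_\xi}$ is injective. A straightforward linear-algebra inversion then produces smooth symbols $m_1(\xi),m_2(\xi)$ on $\R^3\setminus\{0\}$, positively homogeneous of degrees $0$ and $-1$ respectively, with
\[
\widehat F(\xi)\;=\;m_1(\xi)\bigl[A[\widehat F(\xi)]\bigr]+m_2(\xi)\bigl[\widehat{\curl F}(\xi)\bigr].
\]
Transforming back yields $F=T_1(A[F])+T_2(\curl F)$, where $T_1$ is a zero-order Fourier multiplier and $T_2$ factors as $M\circ I_1$ with $I_1$ the Riesz potential of order $1$ and $M$ a zero-order Mikhlin multiplier.

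The Mikhlin-H\"ormander theorem gives $\|T_1 g\|_{L^q}\leq c\|g\|_{L^q}$ for all $1<q<\infty$; since $1\leq p<3$ forces $p^*\geq 3/2>1$, this controls the first summand by $\|A[F]\|_{L^{p^*}}$. For the curl piece with $1<p<3$, Hardy-Littlewood-Sobolev for $I_1$ together with Mikhlin for $M$ yields $\|T_2(\curl F)\|_{L^{p^*}}\leq c\|\curl F\|_{L^p}$. The hard part is $p=1$, where $I_1\colon L^1\to L^{3/2}$ fails in general. The decisive structural fact is that $\di\circ\curl=0$, so each row of $\curl F$ is solenoidal; one then invokes the Bourgain-Brezis/Van Schaftingen endpoint inequality
\[
\|I_1 g\|_{L^{3/2}(\R^3)}\leq c\|g\|_{L^1(\R^3)}\qquad\text{whenever }\di g=0,
\]
applied row by row to $\curl F$, and composes with $M$ (bounded on $L^{3/2}$) to conclude $\|T_2(\curl F)\|_{L^{3/2}}\leq c\|\curl F\|_{L^1}$. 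This endpoint step is the main obstacle of the argument and is exactly where the novelty of the paper lies.
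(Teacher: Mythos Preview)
Your argument is correct and rests on exactly the same key ingredients as the paper: Mikhlin--H\"ormander for the zero-order piece, Hardy--Littlewood--Sobolev for the order $-1$ piece when $1<p<3$, and the Bourgain--Brezis endpoint estimate for solenoidal fields at $p=1$, applied row by row using $\di\curl F^{j}=0$. The organization of the decomposition, however, differs. The paper first applies the Helmholtz decomposition row-wise to write $F=F_{\di}+F_{\curl}$; the divergence-free part $F_{\di}$ is controlled by $\curl F$ via $I_{1}$ (and Bourgain--Brezis at $p=1$), while the curl-free part $F_{\curl}$ is a gradient and is controlled by $A[F_{\curl}]$ via the elliptic multiplier $T_{\A}^{i}$, after which one still has to pass from $A[F_{\curl}]$ back to $A[F]$. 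Your direct inversion of the joint symbol of $(A,\curl)$ yields the splitting $F=T_{1}(A[F])+T_{2}(\curl F)$ in one stroke, which is a bit more streamlined and sidesteps that last step entirely. The necessity construction via highly oscillating gradients in the direction $v_{0}\otimes\xi_{0}\in\ker A$ is essentially the same as the paper's.
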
 
In the case where $1<p<\infty$ and $\Omega\subset\R^{3}$ is bounded, the same method underlying the proof of the previous theorem yields that the estimate 
\begin{align}\label{eq:suboptimal}
\|F\|_{\lebe^{p}(\Omega)}\leq c(\|A[F]\|_{\lebe^{p}(\Omega)}+\|\curl(F)\|_{\lebe^{p}(\Omega)})\qquad\text{for}\;F\in\hold_{c}^{\infty}(\Omega;\R^{3\times 3})
\end{align}
is equivalent to $A$ inducing an elliptic operator $\A$ of the form \eqref{eq:form}, see the discussion at the end of Section~\ref{sec:main1}. By smooth approximation, this gives us back the corresponding inequalities considered in \cite{Neff3,Neff4}. 

The proof of Theorem~\ref{thm:main} together with various extensions is provided in Section~\ref{sec:main}. The paper then is  concluded by discussing Korn-Maxwell-Sobolev-type variants of \eqref{eq:KornMaxwellIntro} and \eqref{eq:KornMaxwellIntroA} on cubes in the situation of non-zero boundary values in Section~\ref{sec:nonzero} for the particularly imporant case of the symmetric and trace-free symmetric gradient operators. Lastly, the appendix provides an extension theorem for divergence-free vector fields.

Let us note that, when preparing the final version of the manuscript, we became aware of the recent preprint \cite{ContiGarroni} of \textsc{Conti \& Garroni} which also uses the Bourgain-Brezis estimate for solenoidal vector fields to arrive at a special case of Theorems~\ref{thm:main} and~\ref{thm:main1} for the symmetric gradient operator $\A=\frac{1}{2}(D+D^{\top})$, cf. \cite[Thm.~1.2]{ContiGarroni}. Whereas \cite{ContiGarroni} focuses on the symmetric gradient and a $\mathrm{SO}(n)$-rigidity result \cite[Thm.~1.1]{ContiGarroni}, our paper rather concentrates on a characterization of operators $\A$ to yield such inequalities (Section~\ref{sec:main}), leading to a unifying theory for all $1\leq p<3$; also note that Section~\ref{sec:nonzero} proceeds slightly differently and covers the trace-free symmetric gradient as well.
\section*{Notation}
For a vector field $F\colon \R^{3}\to\R^{3\times 3}$, we denote $F^{j}$ the $j$-th row of $F$, $j\in\{1,2,3\}$. For vectors $a,b\in\R^{n}$, we denote $\langle a,b\rangle$ the euclidean inner product on $\R^{n}$, and for matrices $A=(a_{ij}),B=(b_{ij})\in\R^{n\times n}$, we use the notation $\langle A,B\rangle =\sum_{ij}a_{ij}b_{ij}$; the meaning will be clear from the context. The $n$-dimensional Lebesgue measure will be denoted $\mathscr{L}^{n}$, and the symbol $\mathscr{F}$ represents the Fourier transform as usual.
\section{Proof of Theorem~\ref{thm:main}}\label{sec:main}
\subsection{Korn-Maxwell inequality of the first kind}\label{sec:main1}
In this section we establish Theorem~\ref{thm:main}. As a vital ingredient, we require 
\begin{lemma}[of Mihlin-H\"{o}rmander type {\cite[Thm.~4.13]{Duo}}]\label{lem:MH}
Let $m\in\hold^{\infty}(\R^{n}\setminus\{0\})$ be a function that is homogeneous of degree zero and $T_{m}$ the operator given by $(T_{m}f)^{\widehat{}}=m\widehat{f}$, then there exist $a\in\mathbb{R}$ and $\Theta\in\hold^{\infty}(\mathbb{S}^{n-1})$ with zero average over $\mathbb{S}^{n-1}$ such that for any $f\in\hold_{c}^{\infty}(\R^{n})$ there holds (with $\mathrm{p.v.}$ denoting the Cauchy principal value) 
\begin{align*}
T_{m}f = af + \mathrm{p.v.}\frac{\Theta(\frac{\cdot}{|\cdot|})}{|\cdot|^{n}}*f, 
\end{align*}
and hence is an $\lebe^{p}$-bounded operator for any $1<p<\infty$.
\end{lemma}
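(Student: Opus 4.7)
The plan is to first split $m$ into its spherical average plus a mean-zero piece, and then to exhibit the mean-zero piece as the Fourier transform of a principal value distribution whose kernel has the required form. Define
$$
a:=\dashint_{\mathbb{S}^{n-1}} m\,\mathrm{d}\sigma,\qquad \widetilde{m}:=m-a,
$$
so that $\widetilde{m}\in\hold^{\infty}(\R^{n}\setminus\{0\})$ is zero-homogeneous with vanishing spherical mean. Since multiplication of $\widehat{f}$ by the constant $a$ corresponds to multiplication of $f$ by $a$, the statement reduces to constructing $\Theta\in\hold^{\infty}(\mathbb{S}^{n-1})$ with $\int_{\mathbb{S}^{n-1}}\Theta\,\mathrm{d}\sigma=0$ such that
$$
T_{\widetilde{m}}f=\mathrm{p.v.}\,\frac{\Theta(\cdot/|\cdot|)}{|\cdot|^{n}}\ast f\qquad\text{for every }f\in\hold_{c}^{\infty}(\R^{n}).
$$

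The key device will be the spherical harmonics expansion. Writing
$$
\widetilde{m}\big|_{\mathbb{S}^{n-1}}=\sum_{k\geq 1}\sum_{j} c_{k,j}\,Y_{k,j}
$$
along an orthonormal basis $\{Y_{k,j}\}_{j}$ of the spherical harmonics of degree $k$, the classical Bochner/Funk--Hecke identity asserts that for each $k\geq 1$ and each $j$, the tempered distribution $\mathrm{p.v.}\,Y_{k,j}(\cdot/|\cdot|)/|\cdot|^{n}$ has Fourier transform $\gamma_{k}\,Y_{k,j}(\xi/|\xi|)$, with an explicit nonzero constant $\gamma_{k}$ whose modulus is polynomially bounded both above and below by powers of $k$. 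The smoothness of $\widetilde{m}$ forces the coefficients $c_{k,j}$ to decay faster than any polynomial in $k$, and the same is then true of $\gamma_{k}^{-1}c_{k,j}$. Consequently
$$
\Theta(\theta):=\sum_{k\geq 1}\sum_{j}\gamma_{k}^{-1}\,c_{k,j}\,Y_{k,j}(\theta)
$$
defines a function in $\hold^{\infty}(\mathbb{S}^{n-1})$ with zero spherical mean satisfying $\mathscr{F}\bigl[\mathrm{p.v.}\,(\Theta(\cdot/|\cdot|)/|\cdot|^{n})\bigr]=\widetilde{m}$, which is the required convolution representation of $T_{\widetilde{m}}$.

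Finally, $\lebe^{p}$-boundedness for $1<p<\infty$ follows from classical Calder\'{o}n--Zygmund theory applied to $K(x)=\Theta(x/|x|)/|x|^{n}$: one has the size bound $|K(x)|\lesssim\|\Theta\|_{\infty}|x|^{-n}$, the gradient estimate $|\nabla K(x)|\lesssim|x|^{-n-1}$ from homogeneity and smoothness of $\Theta$, and the cancellation $\int_{r<|x|<R}K(x)\,\mathrm{d}x=0$ from the zero mean of $\Theta$; together with the $\lebe^{2}$-bound supplied by $m\in\lebe^{\infty}$, these are precisely the hypotheses of the standard singular integral theorem. The main obstacle is the Funk--Hecke step: one needs the precise identification of the Fourier transforms of the spherical-harmonic principal values \emph{together with} quantitative polynomial bounds on $\gamma_{k}$, so that the inversion $\widetilde{m}\mapsto\Theta$ preserves $\hold^{\infty}$-regularity on $\mathbb{S}^{n-1}$ rather than merely producing a distribution.
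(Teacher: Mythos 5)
Your proof is correct and is the standard argument for this fact. The paper supplies no proof of its own but cites \cite[Thm.~4.13]{Duo}, and the route you take --- subtracting the spherical mean, expanding the remainder in spherical harmonics, inverting via the Bochner/Funk--Hecke identity $\mathscr{F}\bigl[\mathrm{p.v.}\,Y_{k}(\cdot/|\cdot|)/|\cdot|^{n}\bigr]=\gamma_{k}Y_{k}(\xi/|\xi|)$ with $|\gamma_{k}|\sim c_{n}k^{-n/2}$, and then using the rapid decay of the coefficients together with the merely polynomial growth of $\gamma_{k}^{-1}$ to conclude $\Theta\in\hold^{\infty}(\mathbb{S}^{n-1})$, followed by classical Calder\'{o}n--Zygmund theory for the $\lebe^{p}$-bound --- is precisely the argument in that reference.
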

Moreover, we recall that for $0<s<n$ and $f\in\lebe_{\locc}^{1}(\R^{n})$, the \emph{Riesz potential} of order $s$ is defined by 
\begin{align*}
I_{s}f(x):=\frac{1}{\gamma(s)} \int_{\R^{n}}\frac{f(y)}{|x-y|^{n-s}}\dif y,\qquad x\in\R^{n},
\end{align*} 
where 
\begin{align*}
\gamma(s):= \frac{ \pi^{n/2} 2^s \Gamma \left(\frac{s}{2}\right)}{\Gamma\left(\frac{n}{2}-\frac{s}{2}\right)}.
\end{align*}

\begin{proof}
We may suppose that $\Omega=\R^{3}$, otherwise we extend $F$ by zero to the entire $\R^{3}$. Ad~$\ref{item:thmmainA}\Rightarrow\ref{item:thmmainB}$.  Let $1\leq p <3$. Writing $F=(F^{1},F^{2},F^{3})^{\top}$ with $F^{j}\in\hold_{c}^{\infty}(\R^{3};\R^{3})$, for each $j\in\{1,2,3\}$, we apply the Helmholtz decomposition to $F^{j}$ to obtain $F^{j}=F_{\di}^{j}+F_{\curl}^{j}$, where $F_{\di}^{j}$ is the divergence-free and $F_{\curl}^{j}$ the $\curl$-free part of $F^{j}$. It is well-known that $F_{\di}^{j}$ and $F_{\curl}^{j}$ can be obtained from $F$ by means of 
\begin{align}\label{eq:Helmholtzrep}
\begin{split}
&F_{\di}^{j}(x) = \frac{1}{4\pi}\mathrm{curl}_{x}\int_{\R^{3}}\frac{\curl(F^{j}(y))}{|x-y|}\dif y, \\ 
&F_{\curl}^{j}(x) = -\frac{1}{4\pi}\nabla_{x}\int_{\R^{3}}\frac{\di(F^{j}(y))}{|x-y|}\dif y.
\end{split}
\end{align}
We put $F_{\di}:=(F_{\di}^{1},F_{\di}^{2},F_{\di}^{3})^{\top}$ and $F_{\curl}:=(F_{\curl}^{1},F_{\curl}^{2},F_{\curl}^{3})^{\top}$. With the Helmholtz decomposition and the representation \eqref{eq:Helmholtzrep}, we have with $p^{*}=\frac{3p}{3-p}$
\begin{align}\label{eq:proofbound1}
\begin{split}
\|F\|_{\lebe^{p^{*}}(\R^{3})} & \leq \|F_{\di}\|_{\lebe^{p^{*}}(\R^{3})}+\|F_{\curl}\|_{\lebe^{p^{*}}(\R^{3})}=: \mathrm{I} + \mathrm{II},
\end{split}
\end{align}
and depending on $p$, the single terms are treated differently as follows. 

Ad~$\mathrm{I}$, \emph{Case $1<p<3$}. By the fractional integration theorem (see, e.g. \cite[Theorem 1 on p.~119]{Stein_1970}) we have that if $p>1$ and $s>0$ satisfy $1<sp<n$, then $I_{s}\colon\lebe^{p}(\R^{n})\to\lebe^{\frac{np}{n-sp}}(\R^{n})$ boundedly. Therefore, by $\eqref{eq:Helmholtzrep}_{1}$ and the fractional integration theorem with $s=1$ and $n=3$, we consequently have with $c=c(p)>0$
\begin{align}\label{eq:IestA}
\begin{split}
\mathrm{I} & \leq \sum_{j\in\{1,2,3\}}\frac{1}{4\pi}\norm{\nabla \Big( \frac{1}{|\cdot|}*\curl(F^{j})\Big)}_{\lebe^{p^{*}}(\R^{3})}\\ 
& \leq c\sum_{j\in\{1,2,3\}}\norm{ \frac{1}{|\cdot|^{2}}*\curl(F^{j})}_{\lebe^{p^{*}}(\R^{3})}\\
& \leq c\sum_{j\in\{1,2,3\}}\|\curl(F^{j})\|_{\lebe^{p}(\R^{3})} \leq c\|\curl(F)\|_{\lebe^{p}(\R^{3})}. 
\end{split}
\end{align}
Ad~$\mathrm{I}$, \emph{Case $p=1$}. It is well-known that the fractional integration theorem does not extend to $p=1$ for general functions.  However, the case we treat has the additional information that
\begin{align}\label{divergence_free}
\operatorname*{div} \curl(F^{j})=0
\end{align}
in the sense of distributions for $j\in\{1,2,3\}$.  Therefore we may use the Bourgain-Brezis estimate $\|T\|_{\lebe^{3/2}(\R^{3})}\leq c\|\curl T\|_{\lebe^{1}(\R^{3})}$ for divergence-free $T\in\lebe_{\locc}^{1}(\R^{3};\R^{3})$ (cf. \cite[Thm.~2]{BB04A}) or the inequality (1.3) on p.~294 of \cite{SSVS}, respectively, in place of the classical theorem on fractional integration to conclude the desired result with the rest of the argument unchanged.  



We now come to the estimation of $\mathrm{II}$. By ellipticity of $\A$, \emph{cf.} \eqref{eq:elliptic}, for $i\in\{1,2,3\}$ the Fourier multiplication operator 
\begin{align}\label{SIO}
T_{\A}^{i}\colon \psi \mapsto \mathscr{F}^{-1}[\xi_{i}(\A^{*}[\xi]\A[\xi])^{-1}\A^{*}[\xi]\widehat{\psi}(\xi)]
\end{align}
satisfies $T_{\A}^{i}(\A\psi)=\partial_{i}\psi$ for $\psi\in\hold_{c}^{\infty}(\R^{3};\R^{3})$. The symbol map $\R^{3}\setminus\{0\}\ni\xi\mapsto \xi_{i}(\A^{*}[\xi]\A[\xi])^{-1}\A^{*}[\xi]\in \mathscr{L}(\R^{N};\R^{3})$ is of class $\hold^{\infty}(\R^{3}\setminus\{0\};\mathscr{L}(\R^{N};\R^{3}))$ and homogeneous of degree zero. Lemma~\ref{lem:MH} thus implies that $T_{\A}^{i}$ extends to an $\lebe^{q}$-bounded operator for all $1<q<\infty$. Applying this result componentwise, for each $1<q<\infty$ there exists $c=c(q,\A)>0$ such that 
\begin{align}\label{eq:multiest}
\begin{split}
\|F_{\curl}\|_{\lebe^{q}(\R^{3})} & \stackrel{\eqref{eq:Helmholtzrep}_{2}}{\leq} c\norm{\nabla\Big(\frac{1}{|\cdot|}*\di(F^{l})\Big)_{l=1,2,3}}_{\lebe^{q}(\R^{3})} \\ 
&\;\;\leq c\sum_{i\in\{1,2,3\}}\norm{T_{\A}^{i}\Big(\A\Big(\Big(\frac{1}{|\cdot|}*\di(F^{l})\Big)_{l=1,2,3}\Big)\Big)}_{\lebe^{q}(\R^{3})}\\ 
&\;\;\leq c\norm{\A\Big(\Big(\frac{1}{|\cdot|}*\di(F^{l})\Big)_{l=1,2,3}\Big)}_{\lebe^{q}(\R^{3})}\\
&\;\;\leq c\norm{A\left[\nabla \Big(\frac{1}{|\cdot|}*\di(F^{l})\Big)_{l=1,2,3}\right]}_{\lebe^{q}(\R^{3})}\\
&\;\; \leq c \norm{A[F_{\curl}]}_{\lebe^{q}(\R^{3})} \leq c\norm{A[F]}_{\lebe^{q}(\R^{3})},
\end{split}
\end{align}
since the entries of $A[F^{\curl}]$ are linear combinations of the entries of $A[F]$, and~\ref{item:main1B} follows with $q=\frac{3p}{3-p}$.

Ad~'$\ref{item:thmmainB}\Rightarrow\ref{item:thmmainA}$'. This is a standard construction which we review only briefly: Suppose that \ref{item:thmmainB} holds. Applying \eqref{eq:KornMaxwellAA} to $F=\nabla u$ for $u\in\hold_{c}^{\infty}(\R^{3};\R^{3})$, \eqref{eq:KornMaxwellAA} implies $\|\nabla u\|_{\lebe^{p^{*}}(\R^{3})}\leq c\|\A u\|_{\lebe^{p^{*}}(\R^{3})}$. Then a classical construction\footnote{Namely, there exist $v\in\R^{3}\setminus\{0\}$ and $\xi\in\R^{3}\setminus\{0\}$ such that $\A[\xi]v=0$. Then put $\psi_{k}(x):=\rho(x)\eta_{k}(\langle x,\xi\rangle)v$ for some $\rho\in\hold_{c}^{\infty}(\ball(0,1);[0,1])$ with $\mathbbm{1}_{\ball(0,1)}\leq\rho\leq\mathbbm{1}_{\ball(0,2)}$ and $\eta_{k}\in\hold_{c}^{\infty}(\R)$ with $\sup_{k}\|\eta_{k}\|_{\lebe^{p}(\R)}<\infty$ and $\lim_{k\to\infty}\|\eta'_{k}\|_{\lebe^{p}(\R)}=\infty$. Then $\sup_{k}\|\A\psi_{k}\|_{\lebe^{p}(\R^{3})}<\infty$ but $\sup_{k}\|D\psi_{k}\|_{\lebe^{p}(\R^{3})}=\infty$.} (see \cite[Prop.~4.1]{ContiGmeineder} for the precise argument in a more general context) implies that $\A$ must be elliptic.  The proof is complete.
\end{proof} 
If $\Omega\subset\R^{3}$ is open and bounded, H\"{o}lder's inequality implies that $\|I_{1}f\|_{\lebe^{q}(\Omega)}\leq c(q,\mathrm{diam}(\Omega))\|f\|_{\lebe^{q}(\Omega)}$ for $f\in\hold_{c}^{\infty}(\Omega)$ for any $1\leq q < \infty$. With this estimate instead of \eqref{eq:IestA},  \eqref{eq:suboptimal} follows; however, note that this estimate does not scale conveniently.

\subsection{Variations on Korn-Maxwell-Sobolev-type inequalities}
We conclude this section by discussing several other embeddings. The underlying approach is the same as for Theorem~\ref{thm:main}, now invoking boundedness properties of fractional and singular integral operators on different function spaces. Suppose that item~\ref{item:main1A} of Theorem~\ref{thm:main} holds.

\emph{Limiting Korn-Maxwell-Sobolev.} If $p=3$ in Theorem~\ref{thm:main}, we let $F\in\hold_{c}^{\infty}(\R^{3};\R^{3\times 3})$ and proceed exactly as up to \eqref{eq:proofbound1}, where $\lebe^{p^{*}}(\R^{3};\R^{3\times 3})$ is now replaced by the space  $\mathrm{BMO}(\R^{3};\R^{3\times 3})$. Since 
\begin{align}
I_{1}\colon \lebe^{n}(\R^{n})\to\mathrm{BMO}(\R^{n})\qquad\text{boundedly}, 
\end{align}
the analogue of \eqref{eq:IestA} yields (upon redefining $\mathrm{I}$ in the obvious way) $\mathrm{I}\leq c\|\curl(F)\|_{\lebe^{3}(\R^{3})}$. On the other hand, the singular integral operators underlying \eqref{SIO} map $\mathrm{BMO}(\R^{3};\R^{N})\to\mathrm{BMO}(\R^{3};\R^{3})$ boundedly (see, e.g. \cite[Theorem 1.1 on p.~296]{Peetre} or \cite[Chpt.~IV.6.3(b)]{Stein}). Thus we obtain 
\begin{align}
\|F\|_{\mathrm{BMO}(\R^{3})}\leq c\big(\|A[F]\|_{\mathrm{BMO}(\R^{3})}+\|\curl(F)\|_{\lebe^{3}(\R^{3})} \big)
\end{align}
for all $F\in\hold_{c}^{\infty}(\R^{3};\R^{3\times 3})$, where $c>0$ is a constant. Here, we have set 
\begin{align*}
\|u\|_{\bmo(\R^{n})}:=\sup_{\substack{Q\,\text{non-degenerate cube}}}\dashint_{Q}\left|u-\dashint_{Q}u\dif y\right\vert\dif x.
\end{align*}

\emph{Korn-Maxwell-Morrey.} Now suppose that $3<p<\infty$. Then it is well-known that there exists a constant $c=c(p)>0$ such that 
\begin{align*}
\|I_{1}f\|_{\dot \hold^{0,\alpha}(\R^{3})}\leq c\|f\|_{\lebe^{p}(\R^{3})}\qquad\text{for all}\;f\in\hold_{c}^{\infty}(\R^{3}), 
\end{align*}
where $\alpha=1-\frac{3}{p}$ and $\|\cdot\|_{{\dot\hold}{^{0,\alpha}}(\R^{3})}$ is the corresponding $\alpha$-H\"{o}lder seminorm. The singular integral operator underlying \eqref{SIO} is bounded as a map ${\dot \hold}{^{0,\alpha}}(\R^{3};\R^{N})\to {\dot\hold}{^{0,\alpha}}(\R^{3};\R^{3})$; this can be seen by the same argument as in \eqref{eq:CZH1}ff. below, realising the H\"{o}lder spaces as Besov spaces (as $0<\alpha<1$) and appealing to \cite[Cor.~6.7.2]{Grafakos}. We thus obtain the estimate 
\begin{align}\label{eq:Morrey}
\|F\|_{\dot \hold^{0,1-\frac{3}{p}}(\R^{3})}\leq c\big(\|A[F]\|_{\dot \hold^{0,1-\frac{3}{p}}(\R^{3})}+\|\curl(F)\|_{\lebe^{p}(\R^{3})} \big)
\end{align}
for all $F\in\hold_{c}^{\infty}(\R^{3};\R^{3\times 3})$, where $c>0$ is a constant. 

\emph{Korn-Maxwell-Lorentz.} Let $1\leq p < \infty$ and $1\leq q \leq \infty$. Recall that for $u\in\lebe_{\locc}^{1}(\R^{n})$ its $(p,q)$-Lorentz norm is given for $1<q<\infty$ by
\begin{align*}
\|u\|_{\lebe^{p,q}(\R^{n})}:=p^{\frac{1}{q}}\Big(\int_{0}^{\infty}t^{q}\mathscr{L}^{n}(\{|u|\geq t\})^{\frac{q}{p}}\frac{\dif t}{t}\Big)^{\frac{1}{q}}
\end{align*}
whereas $\|u\|_{\lebe^{p,\infty}(\R^{n})}:=\sup_{t>0}t\mathscr{L}^{n}(\{|u|\geq t\})^{\frac{1}{p}}$. Then the operator defined in \eqref{SIO} maps $\lebe^{r}\to\lebe^{r}$ boundedly for each $1<r<\infty$, and $\lebe^{1}\to\lebe^{1,\infty}$ boundedly. Hence, by interpolation (see \cite{Hunt}) the operator defined in \eqref{SIO} extends to a bounded linear operator on Lorentz spaces $\lebe^{p,q}(\R^{n})$ for $1<p<\infty$ and all $1\leq q<\infty$. Now, given $1<p<3$, put $p^{*}:=3p/(3-p)$. Then \textsc{O'Neil}'s classical convolution inequality implies that $I_{1}\colon \lebe^{p}(\R^{3})\to \lebe^{p^{*},q}(\R^{3})$ boundedly for any $q\in [1,\infty]$. Then we obtain as above 
\begin{align*} 
\|F\|_{\lebe^{p^{*},q}(\R^{3})} & \leq c\Big(\|A[F]\|_{\lebe^{p^{*},q}(\R^{3})} + \|\curl(F)\|_{\lebe^{p,q}(\R^{3})} \Big)\qquad\text{for all}\;F\in\hold_{c}^{\infty}(\R^{3};\R^{3\times 3}). 
\end{align*}
This estimate equally persists for $p=1$ but must be approached differently. Namely, taking into account \eqref{divergence_free}, by an estimate of  \textsc{Hernandez} and the second named author \cite[Theorem 1.1]{HS} one has the inequality
\begin{align*}
\|I_1\curl(F)\|_{\lebe^{3/2,1}(\R^{3})} \leq C \|\curl(F)\|_{\lebe^{1}(\R^{3})}.
\end{align*}
This completes the argument for the endpoint case $q=1$, while the remaining cases $1<q\leq +\infty$ then follow from a classical inequality due to \textsc{Calder\'on}, 
\begin{align*}
\|g\|_{\lebe^{3/2,q}(\R^{3})} \leq C \|g\|_{\lebe^{3/2,1}(\R^{3})}
\end{align*}
for all $g \in \lebe^{3/2,1}(\R^{3})$.  One could alternatively argue these cases via \textsc{Van Schaftingen}'s duality estimate \cite[Prop.~8.7]{VS11}.

\emph{Fractional Korn-Maxwell.} Let $\theta\in (0,1)$ and $p\in [1,\infty)$. The $\sobo^{\theta,p}$-Sobolev seminorm of a compactly supported function $u\in\lebe_{\locc}^{1}(\R^{n})$ then is given by 
\begin{align*}
\|u\|_{{\dot\sobo}^{\theta,p}(\R^{n})}:=\Big(\iint_{\R^{n}\times\R^{n}}\frac{|u(x)-u(y)|^{p}}{|x-y|^{n+\theta p}}\dif x\dif y\Big)^{\frac{1}{p}}. 
\end{align*}
Given $\theta\in (0,1)$ and $1\leq p < 3$, the desired inequality now takes the form 
\begin{align}\label{eq:fractional}
\|F\|_{{\dot\sobo}^{\theta,p^{*}(\theta)}(\R^{3})}\leq c\big(\|A[F]\|_{{\dot\sobo}^{\theta,p^{*}(\theta)}(\R^{3})}+\|\curl(F)\|_{\lebe^{p}(\R^{3})} \big) 
\end{align}
for $F\in\hold_{c}^{\infty}(\R^{3};\R^{3\times 3})$, where $p^{*}(\theta):=3p/(3-(1-\theta)p)$ denotes the associated Sobolev embedding exponent. Toward \eqref{eq:fractional}, we adopt a slightly more general viewpoint since multiplier theorems are most conveniently stated in terms of homogeneous Besov spaces. Consider the kernel from Lemma~\ref{lem:MH}
\begin{align*}
K(x):=\frac{\Theta(\frac{x}{|x|})}{|x|^{n}},\qquad x\in\R^{n}\setminus\{0\},
\end{align*}
for $\Theta\in\hold^{\infty}(\mathbb{S}^{n-1})$ with zero mean over $\mathbb{S}^{n-1}$.  This kernel satisfies the three Calder\'{o}n-Zygmund-H\"{o}rmander conditions
\begin{align}
& \sup_{0<R<\infty}\frac{1}{R}\int_{\ball(x,R)}|K(x)|\,|x|\dif x \leq A_{1}, \label{eq:CZH1}\\
& \sup_{y\in\R^{n}\setminus\{0\}}\int_{\{x\colon\,|x|\geq 2|y|\}}|K(x-y)-K(x)|\dif y \leq A_{2},\label{eq:CZH2}\\
& \sup_{0<R_{1}<R_{2}<\infty}\left\vert\int_{\{x\colon\,R_{1}<|x|<R_{2}\}}K(x)\dif x\right\vert \leq A_{3}\label{eq:CZH3}
\end{align}
for three finite constants $A_{1},A_{2},A_{3}\geq 0$. Here, \eqref{eq:CZH1} and \eqref{eq:CZH3} straightforwardly follow by passing to polar coordinates, where we moreover use for \eqref{eq:CZH3} that $K$ has vanishing mean over $\mathbb{S}^{n-1}$. By \cite[Prop.~5.2]{Duo} this follows from $|\nabla K(x)|\leq C|x|^{-n-1}$ for all $x\in\R^{n}\setminus\{0\}$ and a constant $C>0$, here being a consequence of $\Theta\in\hold^{\infty}(\R^{n}\setminus\{0\})$. In conclusion, since \eqref{eq:CZH1}--\eqref{eq:CZH3} are satisfied, \cite[Cor.~6.7.2]{Grafakos} implies that $T_{m}$ from Lemma~\ref{lem:MH} is a bounded linear operator on the homogeneous Besov space ${\dot\besov}{_{p,q}^{s}}(\R^{n})$ for all $1\leq p\leq\infty$, $0<q\leq \infty$ and $s\in\R$. By a component-wise application, this carries over to $T_{\A}^{i}$ given by \eqref{SIO} as well. 

Given $1<p<3$ and $\theta\in (0,1)$, put $\overline{p}:=p^{*}(\theta)$ for brevity. Then, e.g., \cite[Chpt.~5.2.3, Thm.~1, Chpt.~2.7.1, Thm.~1(ii)]{Triebel} and \cite[Thm.~2.1]{Jawerth} imply that for any $1<q<\infty$ there holds
\begin{align}\label{eq:triebelinc}
\lebe^{p}(\R^{n})\simeq {\dot{\mathrm{F}}}{_{p,2}^{0}}(\R^{n}) \hookrightarrow {\dot{\mathrm{F}}}{_{\overline{p},q}^{\theta-1}}(\R^{3}) \stackrel{I_{1}}{\longrightarrow} {\dot{\mathrm{F}}}{_{\overline{p},q}^{\theta}}(\R^{3}),
\end{align}
and ${\dot{\mathrm{F}}}{_{\overline{p},q}^{\theta}}(\R^{3})\hookrightarrow{\dot{\mathrm{B}}}{_{\overline{p},q}^{\theta}}(\R^{3})$ boundedly provided $q\geq \overline{p}$ with the corresponding homogeneous Triebel-Lizorkin spaces 
${\dot{\mathrm{F}}}{_{p,q}^{s}}$. For such $\theta,p,q$ we then obtain with the above multiplier estimate
\begin{align}\label{eq:inbetween}
\|F\|_{{\dot{\mathrm{B}}}{_{\overline{p},q}^{\theta}}(\R^{3})} & \leq c\Big(\|A[F]\|_{{\dot{\mathrm{B}}}{_{\overline{p},q}^{\theta}}(\R^{3})} + \|\curl(F)\|_{\lebe^{p}(\R^{3})} \Big),\;\;\;F\in\hold_{c}^{\infty}(\R^{3};\R^{3\times 3}). 
\end{align}
For $p=1$, the requisite modification of \eqref{eq:triebelinc} merely yields \eqref{eq:inbetween} with the homogeneous Hardy-$\mathcal{H}^{1}$-norm of $\curl(F)$ instead of $\|\curl(F)\|_{\lebe^{1}(\R^{3})}$. Yet, validity of \eqref{eq:inbetween} for $p=1$ can be seen as follows: By \textsc{Van Schaftingen}'s duality estimate \cite[Prop.~8.7]{VS11} we have for $\vartheta\in (0,1)$ and $1<p_{2},q_{2}<\infty$ with $\vartheta p_{2} = n$
\begin{align}\label{eq:VSduality1}
\int_{\R^{n}}\langle\Phi,\varphi\rangle \dif x \leq c \|\Phi\|_{\lebe^{1}(\R^{n})}\|\varphi\|_{{\dot{\mathrm{B}}}{_{p_{2},q_{2}}^{\vartheta}}(\R^{n})}
\end{align}
for all $\Phi\in\lebe^{1}(\R^{n};\R^{n})$ with $\mathrm{div}(\Phi)=0$ in $\mathscr{D}'(\R^{n})$ and all $\varphi\in\hold_{c}^{\infty}(\R^{n};\R^{n})$. In consequence, a row-wise application of \eqref{eq:VSduality1} with $\vartheta=1-\theta$ and $p_{2}=\overline{1}'$ yields
\begin{align*}
\|F^{\di}\|_{{\dot{\mathrm{B}}}{_{\overline{1},q}^{\theta}}(\R^{3})} & \leq c\norm{\nabla\Big(\frac{1}{|\cdot|}*\curl(F)\Big)}_{{\dot{\mathrm{B}}}{_{\overline{1},q}^{\theta}}(\R^{3})} \\ 
& \leq c\norm{\curl(F)}_{{\dot{\mathrm{B}}}{_{\overline{1},q}^{\theta-1}}(\R^{3})} \leq c\norm{\curl(F)}_{({\dot{\mathrm{B}}}{_{\overline{1}',q'}^{1-\theta}}(\R^{3}))'}\leq c\|\curl(F)\|_{\lebe^{1}(\R^{3})}. 
\end{align*}
Now \eqref{eq:fractional} follows upon realising that ${\dot\sobo}{^{\theta,\overline{p}}}(\R^{3})\simeq {\dot\besov}{_{\overline{p},\overline{p}}^{\theta}}(\R^{3})$; other variants of \eqref{eq:fractional} involving other Besov spaces can be obtained similarly.
\begin{remark}
For an open set with Lipschitz boundary and $1<p<\infty$, the space $\sobo_{0}^{\curl,p}(\Omega;\R^{3\times 3})$ may be introduced as the completion of $\hold_{c}^{\infty}(\Omega;\R^{3\times 3})$ for the norm $\|u\|_{\curl,p}:=(\|u\|_{\lebe^{p}(\Omega)}^{p}+\|\curl(u)\|_{\lebe^{p}(\Omega)}^{p})^{\frac{1}{p}}$. Such fields can be characterised as those $u\in\lebe^{p}(\Omega;\R^{3\times 3})$ such that $\curl(u)\in\lebe^{p}(\Omega;\R^{3\times 3})$ and the componentwise tangential traces $u^{i}\times\nu_{\partial\Omega}$ vanish in $\besov_{p,p}^{-1/p}(\partial\Omega;\R^{3})$, $i\in\{1,2,3\}$ (see, e.g., \cite[Sec.~3]{Neff3}). By density, all of the previous inequalities on open and bounded Lipschitz domains $\Omega$ persist for such maps.
\end{remark}
\section{Korn-Maxwell-Sobolev inequality of the second kind}\label{sec:nonzero}
We conclude the paper by addressing a variant of the Korn-Maxwell-Sobolev type inequality on domains that allows for non-zero boundary values. Here our focus  is on the specific operators $\sg$ or $\sg^{D}$ as alluded to in the introduction, \emph{cf.} \eqref{eq:KornMaxwellIntro} and \eqref{eq:KornMaxwellIntroA}; the case of general elliptic operators is addressed below in Open Question~\ref{OQ:1}. 

To keep our exposition at a reasonable length, we work with the unit cube $Q:=(0,1)^{3}$ in $\R^{3}$ throughout; see Open Question~\ref{OQ:1} for more general domains. We note that inequalities \eqref{eq:KornMaxwellIntro} and \eqref{eq:KornMaxwellIntroA} do not extend to maps $F\in\hold^{\infty}(\overline{Q};\R^{3\times 3})$.
In fact, should \eqref{eq:KornMaxwellIntro} hold for all $F\in\hold^{\infty}(\overline{Q};\R^{3\times 3})$, we necessarily have 
\begin{align*}
(F^{\sym}\equiv 0\;\text{and}\;\curl(F)=0)\Longrightarrow F \equiv 0\;\text{in}\;Q.
\end{align*}
To see this, $\curl(F)=0$ implies by virtue of $Q$ being simply connected that $F=\nabla u$ for some $u\in\hold^{\infty}(\overline{Q};\R^{3})$, and $F^{\sym}=0$ yields $F^{\sym}=\sg(u)=0$. By connectedness of $Q$, $u$ must be of the form $u(x)=Ax+b$ for some $A\in\R_{\mathrm{skew}}^{3\times 3}$ and some $b\in\R^{3}$; maps of this form are called \emph{rigid deformations} and denoted $\mathcal{R}(\R^{3})$. But then $F=\nabla u = A$ which, in general, does not equal zero. A similar argument also applies to inequalities of the form \eqref{eq:KornMaxwellIntroA}, where we must now use the fact that the nullspace of $\sg^{D}(u):=\sg(u) - \frac{1}{n}\di(u)\mathbbm{1}_{n}$ for $n\geq 3$ is given by the conformal Killing vectors
\begin{align*}
\mathcal{K}(\R^{n}):=\{\mathbf{p}\colon x \mapsto 2\langle a,x\rangle x - |x|^{2}a+Q'x+\rho x + b\colon\;a,b\in\R^{n},\;\rho\in\R,\;Q'\in\R_{\mathrm{skew}}^{n\times n}\}, 
\end{align*}
see \cite{Reshetnyak}. In light of these considerations, the inequality of interest consequently is given by the following
\begin{theorem}[Korn-Maxwell-Sobolev II]\label{thm:main1}
Let $1\leq p <3$. Then there exists a constant $c=c(p)>0$ such that the following hold:
\begin{enumerate}
\item\label{item:main1A} For all $F\in\hold^{\infty}(\overline{Q};\R^{3\times 3})$ with 
\begin{align}\label{eq:FIX1}
\int_{Q}\langle F,\Pi\rangle\dif x = 0\qquad\text{for all}\;\Pi\in\R_{\scew}^{3\times 3} = \nabla\mathcal{R}(\R^{3})
\end{align}
there holds 
\begin{align}\label{eq:KornIIA}
\|F\|_{\lebe^{\frac{3p}{3-p}}(Q)}\leq c\big(\|F^{\sym}\|_{\lebe^{\frac{3p}{3-p}}(Q)}+\|\curl(F)\|_{\lebe^{p}(Q)} \big).
\end{align}
\item\label{item:main1B} For all $F\in\hold^{\infty}(\overline{Q};\R^{3\times 3})$ with 
\begin{align}\label{eq:FIX2}
\int_{Q}\langle F,\Pi\rangle\dif x = 0\qquad\text{for all}\;\Pi\in\nabla\mathcal{K}(\R^{3})
\end{align}
there holds 
\begin{align}\label{eq:KornIIB}
\|F\|_{\lebe^{\frac{3p}{3-p}}(Q)}\leq c\big(\|F^{\mathrm{dev}}\|_{\lebe^{\frac{3p}{3-p}}(Q)}+\|\curl(F)\|_{\lebe^{p}(Q)} \big).
\end{align}
\end{enumerate}
Here, we have set $A^{\mathrm{dev}}:=A^{\sym}-\frac{1}{3}\mathrm{tr}(A)\mathbbm{1}_{3}$ for $A\in\R^{3\times 3}$.  
\end{theorem}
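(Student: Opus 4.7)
The plan is to reduce Theorem~\ref{thm:main1} to the zero-boundary-value setting of Theorem~\ref{thm:main} via a Helmholtz-type decomposition on the cube $Q$. Given $F\in\hold^{\infty}(\overline{Q};\R^{3\times 3})$, the first step is to construct a matrix field $V$ on $Q$ with $\curl V=\curl F$ row-wise together with the Sobolev-type bound
\begin{align*}
\|V\|_{\lebe^{p^{*}}(Q)}\leq c\,\|\curl(F)\|_{\lebe^{p}(Q)},\qquad 1\leq p<3.
\end{align*}
For $1<p<3$, this can be achieved by a standard Bogovskii/Poincar\'{e}-type right inverse to the curl followed by Sobolev embedding. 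For the delicate endpoint $p=1$, I would apply the divergence-free extension theorem from the appendix row-wise to $\curl(F)$ (using $\di((\curl F)^{i})=0$), and then use the Biot--Savart formula $V^{i}(x)=\frac{1}{4\pi}\curl_{x}\int_{\R^{3}}\frac{\tilde g^{i}(y)}{|x-y|}\dif y$ on $\R^{3}$. The resulting $V$ is divergence-free with $\curl V=\tilde g$, and the Bourgain--Brezis critical estimate (invoked already in the proof of Theorem~\ref{thm:main}) yields $\|V\|_{\lebe^{3/2}(\R^{3})}\leq c\,\|\tilde g\|_{\lebe^{1}(\R^{3})}\leq c\,\|\curl(F)\|_{\lebe^{1}(Q)}$, giving the bound on $Q$ after restriction.

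Having such $V$, the field $F-V$ has vanishing curl on the simply connected set $Q$, so $F-V=\nabla u$ for some $u\in\hold^{\infty}(\overline{Q};\R^{3})$. For part~\ref{item:main1A}, the hypothesis \eqref{eq:FIX1} says that the skew-symmetric part of $\int_{Q}F\dif x$ vanishes; combined with $F=V+\nabla u$, this forces the skew-symmetric part $\bar{A}$ of $\int_{Q}\nabla u\dif x$ to equal $-\int_{Q}V^{\scew}\dif x$, and H\"{o}lder's inequality (using $|Q|=1$) yields $|\bar{A}|\leq c\,\|V\|_{\lebe^{p^{*}}(Q)}$. Applying the classical $\lebe^{p^{*}}$-Korn inequality on $Q$ (valid as $p^{*}>1$) to $u(x)-\bar{A}x$ then gives
\begin{align*}
\|\nabla u\|_{\lebe^{p^{*}}(Q)}\leq c\bigl(\|\sg(u)\|_{\lebe^{p^{*}}(Q)}+|\bar{A}|\bigr)\leq c\bigl(\|F^{\sym}\|_{\lebe^{p^{*}}(Q)}+\|V\|_{\lebe^{p^{*}}(Q)}\bigr),
\end{align*}
where the second inequality uses $\sg(u)=F^{\sym}-V^{\sym}$. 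Combining with Step~1 and the triangle inequality $\|F\|_{\lebe^{p^{*}}(Q)}\leq\|\nabla u\|_{\lebe^{p^{*}}(Q)}+\|V\|_{\lebe^{p^{*}}(Q)}$ yields \eqref{eq:KornIIA}.

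Part~\ref{item:main1B} proceeds identically, replacing the classical Korn inequality by its trace-free analogue on $Q$ (of Reshetnyak/Dain type, whose nullspace is precisely $\nabla\mathcal{K}(\R^{3})$), and the skew-symmetric matrices by $\nabla\mathcal{K}(\R^{3})$; the orthogonality condition \eqref{eq:FIX2} is exactly what is needed to cancel the $\nabla\mathcal{K}(\R^{3})$-projection of $\int_{Q}\nabla u\dif x$ up to an $\lebe^{p^{*}}$-error controlled by $V$. The principal obstacle I anticipate is the construction of $V$ in the endpoint case $p=1$: neither the Bogovskii calculus nor the standard fractional integration theorem extend to $\lebe^{1}$, so one is forced to exploit the row-wise divergence-freeness of $\curl(F)$ together with the appendix's extension theorem in order to deploy the Bourgain--Brezis critical estimate; all other ingredients (Korn's inequalities, simple-connectedness of $Q$) are classical.
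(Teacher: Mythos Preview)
Your strategy is genuinely different from the paper's. The paper does not build an explicit Helmholtz splitting on $Q$; instead it invokes the Ne\v{c}as--Lions estimates \eqref{eq:symcurlpreliminary}--\eqref{eq:devsymcurlpreliminary}, which already contain the Korn-type information and reduce everything to bounding $\|\curl(F)\|_{\sobo^{-1,p^{*}}(Q)}$ by $\|\curl(F)\|_{\lebe^{p}(Q)}$. For $1<p<3$ this is Sobolev embedding in duality, and for $p=1$ it is the localised Bourgain--Brezis duality estimate of Proposition~\ref{prop:BBlocal}. Your route---construct $V$ with $\curl V=\curl F$ and $\|V\|_{\lebe^{p^{*}}(Q)}\leq c\|\curl F\|_{\lebe^{p}(Q)}$, then apply classical (resp.\ trace-free) Korn to $u$ with $\nabla u=F-V$---is more constructive and, for $1<p<3$, entirely sound.

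The gap is at $p=1$. Lemma~\ref{lem:extendingdivfree} gives a divergence-free $\widetilde{g}^{i}\in\lebe^{1}((-1,2)^{3};\R^{3})$, but to run Biot--Savart and the Bourgain--Brezis estimate on $\R^{3}$ you implicitly extend $\widetilde{g}^{i}$ by zero to all of $\R^{3}$. That zero extension is \emph{not} solenoidal in $\mathscr{D}'(\R^{3})$: the reflection construction does not annihilate the normal trace of $\widetilde{g}^{i}$ on $\partial(-1,2)^{3}$, so $\di(\widetilde{g}^{i})$ acquires a surface measure there. Consequently the Biot--Savart field satisfies $\curl V^{i}=\widetilde{g}^{i}+\nabla h^{i}$ with $h^{i}=\tfrac{1}{4\pi}|\cdot|^{-1}*\di(\widetilde{g}^{i})$ harmonic but generically nonconstant on $Q$; hence $\curl(F^{i}-V^{i})=-\nabla h^{i}\neq 0$ on $Q$, and $F-V$ is not a gradient. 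This is exactly the obstruction flagged in Open Question~\ref{OQ:1}(b). The paper sidesteps it by staying in duality: Proposition~\ref{prop:BBlocal} cuts off $\widetilde{g}^{i}$ and feeds the resulting nonzero divergence into the two-term estimate \eqref{eq:cocancelling}, yielding $\|\curl(F)\|_{\sobo^{-1,3/2}(Q)}\leq c\|\curl(F)\|_{\lebe^{1}(Q)}$ without ever needing a globally solenoidal $\lebe^{1}(\R^{3})$ extension. To repair your argument at $p=1$ you would need either a domain version of the Bourgain--Brezis solvability theorem (i.e.\ $\curl V=g$ on $Q$ with $\|V\|_{\lebe^{3/2}(Q)}\leq c\|g\|_{\lebe^{1}(Q)}$, cf.\ \cite{BB07}), or to derive such a $V$ from Proposition~\ref{prop:BBlocal} by Hodge duality---at which point the two proofs essentially merge.
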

Condition \eqref{eq:FIX1} is particularly transparent, being equivalent to $F^{\mathrm{skew}}$ having integral zero over $Q$. Coming back to our initial discussion, in the framework of \eqref{eq:KornIIA} $F^{\sym}=0$ and $\curl(F)=0$ imply that $F=\nabla u = A$ for some $A\in\mathbb{R}_{\scew}^{3\times 3}$. However, in this situation, the orthogonality condition \eqref{eq:FIX1} with $\Pi=A$ implies $\Pi=0$ and so $F=0$, too. A similar consideration equally yields consistency of inequality \eqref{eq:KornIIB} subject to \eqref{eq:FIX2}. 


The strategy to arrive at Theorem~\ref{thm:main1} is similar to that of Theorem~\ref{thm:main}, where now the global singular integral or Fourier multiplier  estimate underyling \eqref{eq:multiest} is replaced by the Ne\v{c}as-Lions lemma. This strategy has also been pursued in \cite{Neff3,Neff4}, where we now employ a duality estimate as in \cite{BB04A,VS04} to deal with the corresponding negative norms.

In \cite{VS04} (also see \cite{BB04A,BB07}) the following fundamental inequality is established, which moreover can be used to derive the Bourgain-Brezis-estimate for solenoidal fields: There exists a constant $c=c(n)>0$ such that for all $\Phi\in\lebe^{1}(\R^{n};\R^{n})$ with $\mathrm{div}(\Phi)\in\lebe^{1}(\R^{n})$ there holds 
\begin{align}\label{eq:cocancelling}
\int_{\R^{n}}\langle\Phi,\varphi\rangle\dif x \leq c(\|\Phi\|_{\lebe^{1}(\R^{n})}\|\nabla\varphi\|_{\lebe^{n}(\R^{n})}+\|\di(\Phi)\|_{\lebe^{1}(\R^{n})}\|\varphi\|_{\lebe^{n}(\R^{n})} )
\end{align}
for all $\varphi\in\hold_{c}^{\infty}(\R^{n};\R^{n})$. The importance of \eqref{eq:cocancelling} is based on the fact that ${\dot\sobo}{^{1,n}}(\R^{n})\not\hookrightarrow \lebe^{\infty}(\R^{n})$ for $n\geq 2$. To utilise \eqref{eq:cocancelling} in view of Theorem~\ref{thm:main1}, we require a localised version as follows:
\begin{proposition}\label{prop:BBlocal}
There exists a constant $c=c(n)>0$ such that we have 
\begin{align}\label{eq:cocancelling1}
\int_{(0,1)^{n}}\langle\Phi,\varphi\rangle\dif x \leq c\|\Phi\|_{\lebe^{1}((0,1)^{n})}\|\nabla\varphi\|_{\lebe^{n}((0,1)^{n})}
\end{align}
for all $\Phi\in\hold(\overline{(0,1)^{n}};\R^{n})\cap\hold^{1}((0,1)^{n};\R^{n})$ with $\di(\Phi)=0$  and all $\varphi\in\hold_{c}^{\infty}((0,1)^{n};\R^{n})$.
\end{proposition}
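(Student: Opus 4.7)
The goal is to deduce the localised estimate \eqref{eq:cocancelling1} from the global estimate \eqref{eq:cocancelling} by a suitable extension procedure. Since $\varphi \in \ccinfty(Q;\R^n)$, we may extend $\varphi$ by zero to all of $\R^n$ without affecting any of the norms appearing on the right-hand side of \eqref{eq:cocancelling1}. The substantive work therefore lies in extending $\Phi$ from $Q$ to a vector field $\widetilde\Phi$ on the whole of $\R^n$ that is \emph{globally divergence-free}, remains integrable, and satisfies the quantitative bound
\begin{equation*}
\widetilde\Phi|_Q = \Phi,\qquad \di\widetilde\Phi = 0\;\text{in}\;\mathscr{D}'(\R^n),\qquad \|\widetilde\Phi\|_{\lebe^1(\R^n)} \leq c\|\Phi\|_{\lebe^1(Q)}
\end{equation*}
for a dimensional constant $c>0$. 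Granted such an extension, the claim follows at once: applying \eqref{eq:cocancelling} to the pair $(\widetilde\Phi,\varphi)$ makes the divergence term on the right-hand side vanish, leaving
\begin{equation*}
\int_{\R^n}\langle\widetilde\Phi,\varphi\rangle\dif x \leq c\|\widetilde\Phi\|_{\lebe^1(\R^n)}\|\nabla\varphi\|_{\lebe^n(\R^n)} \leq c\|\Phi\|_{\lebe^1(Q)}\|\nabla\varphi\|_{\lebe^n(Q)},
\end{equation*}
and since $\spt\varphi\subset Q$ the left-hand side is exactly $\int_Q\langle\Phi,\varphi\rangle\dif x$.

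The required divergence-free extension is precisely the content of the appendix advertised in the introduction, so the plan is to cite that result. The compatibility condition $\int_{\partial Q}\Phi\cdot\nu\dif\mathcal{H}^{n-1}=0$ that is necessary for the existence of \emph{any} divergence-free $L^1$-extension is automatic in our setting: it follows from $\di\Phi=0$ in $Q$ together with the divergence theorem, which is available since $\Phi$ is continuous up to $\partial Q$ and $C^1$ inside.

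The main technical obstacle sits entirely in the extension lemma. A naive zero extension of $\Phi$ across $\partial Q$ produces a distributional divergence concentrated as the normal-trace measure $(\Phi\cdot\nu)\mathcal{H}^{n-1}\mres\partial Q$, which is generically non-zero. One must therefore construct a compensating field, supported say in a collar neighbourhood of $\partial Q$, whose divergence cancels this boundary contribution and whose $L^1$-norm is controlled by $\|\Phi\|_{L^1(Q)}$. The naive route---solving $\di\Psi = g$ on the exterior via a Newtonian potential---is hampered by the absence of $L^1$-to-$W^{1,1}$ control for the solution operator; instead one exploits the fact that the compensator only needs to match a prescribed flux on each face of $Q$, which can be achieved face by face by explicit constructions using bounded potentials. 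The precise realisation is deferred to the appendix of the paper, and invoking it closes the proof.
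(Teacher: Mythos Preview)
Your overall strategy---reduce to the global Van~Schaftingen/Bourgain--Brezis estimate \eqref{eq:cocancelling} via a divergence-free extension---is exactly the paper's, but there is a gap in how you invoke the appendix. The extension lemma (Lemma~\ref{lem:extendingdivfree}) does \emph{not} produce a globally divergence-free field in $\lebe^{1}(\R^{n};\R^{n})$; it only yields $\widetilde{\Phi}\in\lebe^{1}((-1,2)^{n};\R^{n})$ with $\di\widetilde{\Phi}=0$ in $\mathscr{D}'((-1,2)^{n})$. Iterating the reflection does not help (the $\lebe^{1}$-norm diverges), and a zero extension across $\partial(-1,2)^{n}$ makes the distributional divergence a surface measure, so \eqref{eq:cocancelling} no longer applies. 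Consequently your assertion that ``the divergence term on the right-hand side vanishes'' is not justified by what the appendix actually delivers.

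The paper closes this gap with one extra step you are missing: after extending to $(-1,2)^{n}$, one multiplies $\widetilde{\Phi}$ by a cut-off $\rho\in\hold_{c}^{\infty}((-1,2)^{n})$ with $\rho\equiv 1$ on $Q$, and applies \eqref{eq:cocancelling} to $\rho\widetilde{\Phi}$. Now $\di(\rho\widetilde{\Phi})=\langle\widetilde{\Phi},\nabla\rho\rangle$ is not zero, but it \emph{is} in $\lebe^{1}$ with norm $\leq c\|\Phi\|_{\lebe^{1}(Q)}$; the resulting term $\|\di(\rho\widetilde{\Phi})\|_{\lebe^{1}}\|\varphi\|_{\lebe^{n}}$ is then absorbed via Poincar\'e's inequality $\|\varphi\|_{\lebe^{n}(Q)}\leq c\|\nabla\varphi\|_{\lebe^{n}(Q)}$. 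Your description of the extension mechanism (compensating fluxes face by face) also does not match the appendix, which uses iterated odd/even reflections instead.
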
 
Note that \eqref{eq:cocancelling1} differs from \eqref{eq:cocancelling} (for solenoidal fields) merely by the domain of integration and that of the corresponding Lebesgue norms on the right-hand side. 
\begin{proof}[Proof of Proposition~\ref{prop:BBlocal}]
Let $\Phi$ be as in the proposition. By Lemma~\ref{lem:extendingdivfree} there exists $\widetilde{\Phi}\in\lebe^{1}((-1,2)^{n};\R^{n})$ such that $\widetilde{\Phi}|_{(0,1)^{n}}=\Phi$, $\mathrm{div}(\widetilde{\Phi})=0$ in $\mathscr{D}'((-1,2)^{n};\R^{n})$ and $\|\widetilde{\Phi}\|_{\lebe^{1}((-1,2)^{n})}\leq 3^{n}\|\Phi\|_{\lebe^{1}((0,1)^{n})}$. We pick a smooth cut-off function $\rho\in\hold_{c}^{\infty}((-1,2)^{n})$ with $\mathbbm{1}_{(0,1)^{n}}\leq \rho \leq \mathbbm{1}_{(-1,2)^{n}}$. Then we have, using $\mathrm{div}(\rho\widetilde{\Phi})= \rho\mathrm{div}(\widetilde{\Phi})+\langle\widetilde{\Phi},\nabla\rho\rangle$, 
\begin{align*}
\int_{(0,1)^{n}}\langle\Phi,\varphi\rangle\dif x & = \int_{\R^{n}}\langle\rho\widetilde{\Phi},\varphi\rangle\dif x \\ 
& \!\!\!\stackrel{\eqref{eq:cocancelling}}{\leq} C\big(\|\rho\widetilde{\Phi}\|_{\lebe^{1}(\R^{n})}\|\nabla\varphi\|_{\lebe^{n}(\R^{n})} + \|\mathrm{div}(\rho\widetilde{\Phi})\|_{\lebe^{1}(\R^{n})}\|\varphi\|_{\lebe^{n}(\R^{n})} \big)\\
& \leq C\big(\|\widetilde{\Phi}\|_{\lebe^{1}((-1,2)^{n})}\|\nabla\varphi\|_{\lebe^{n}((0,1)^{n})} + \|\mathrm{div}(\widetilde{\Phi})\|_{\lebe^{1}((-1,2)^{n})}\|\varphi\|_{\lebe^{n}(\R^{n})}\big.\\ & \big.\;\;\;\;\;\;\;\;\;\;\;\;+\|\langle\widetilde{\Phi},\nabla\rho\rangle\|_{\lebe^{1}(\R^{n})}\|\varphi\|_{\lebe^{n}(\R^{n})} \big)\\ 
& \!\!\!\!\!\!\!\!\!\!\!\!\!\!\!\stackrel{\text{Properties of $\widetilde{\Phi}$}}{\leq} C\|\Phi\|_{\lebe^{1}((0,1)^{n})}(\|\varphi\|_{\lebe^{n}((0,1)^{n})}+\|\nabla \varphi\|_{\lebe^{n}(\R^{n})}) \\ 
& \!\leq C\|\Phi\|_{\lebe^{1}((0,1)^{n})}\|\nabla \varphi\|_{\lebe^{n}((0,1)^{n})}, 
\end{align*}
the ultimate inequality being a consequence of Poincar\'{e}'s inequality. This finishes the proof.
\end{proof}
\begin{proof}[Proof of Theorem~\ref{thm:main1}]
Ad~\ref{item:main1A}. Let $1<q<\infty$ and pick $\lebe^{2}(Q;\R^{3\times 3})$-orthonormal bases $\{\mathbf{e}_{1},...,\mathbf{e}_{m_{1}}\}$, $\{\mathbf{f}_{1},...,\mathbf{f}_{m_{2}}\}$ of $\nabla\mathcal{R}(\R^{3})$ or $\nabla\mathcal{K}(\R^{3})$, respectively.  We then record from \cite[Eq.~(42)]{Neff3} and \cite[Eq.~(3.24)]{Neff4} that there exists a constant $c=c(q)>0$ such that\footnote{In the argument underlying \cite[Eq.~(42)]{Neff3} (and similarly for \cite[Eq.~(3.24)]{Neff4}), the authors deal with functionals $l_{i}\colon\nabla\mathcal{R}(\R^{3})\to\R$ or $l_{i}\colon\nabla\mathcal{K}(\R^{3})\to\R$ which satisfy $l_{i}(\mathbf{e}_{j})=\delta_{ij}$ and extend them to $\sobo^{\curl,p}(Q;\R^{3\times 3})$ by Hahn-Banach. The choices $F\mapsto \langle\mathbf{e}_{i},F\rangle_{\lebe^{2}}$ or $F\mapsto\langle\mathbf{f}_{i},F\rangle_{\lebe^{2}}$ can, since $\mathbf{e}_{j},\mathbf{f}_{j}\in\lebe^{\infty}$, directly be defined on $\sobo^{\curl,p}(Q;\R^{3\times 3})$ without appealing to Hahn-Banach.}
\begin{align}\label{eq:symcurlpreliminary}
&\|F\|_{\lebe^{q}(Q)}\leq c\Big(\|F^{\sym}\|_{\lebe^{q}(Q)}+\|\curl(F)\|_{\sobo^{-1,q}(Q)}+\sum_{\ell=1}^{m_{1}}\left\vert\int_{Q}\langle\mathbf{e}_{\ell},F\rangle\dif x\right\vert\Big),\\\label{eq:devsymcurlpreliminary}
&\|F\|_{\lebe^{q}(Q)}\leq c\Big(\|F^{\mathrm{dev}}\|_{\lebe^{q}(Q)}+\|\curl(F)\|_{\sobo^{-1,q}(Q)}+\sum_{\ell=1}^{m_{2}}\left\vert\int_{Q}\langle\mathbf{f}_{\ell},F\rangle\dif x\right\vert\Big)
\end{align}
hold for all $F\in\hold^{\infty}(\overline{Q};\R^{3\times 3})$. It is precisely these estimates which are a consequence of the Ne\v{c}as-Lions lemma. We now distinguish two cases: 

\emph{Case $1<p<3$.} We note that $(\frac{3p}{3-p})'=\frac{3p}{4p-3}$ and, since $1<p<3$, $\frac{3p}{4p-3}\in (1,3)$. Thus, by the usual Sobolev embedding theorem and denoting $\theta^{*}=\frac{3\theta}{3-\theta}$,  
\begin{align}\label{eq:auxilemb}
{\dot\sobo}^{1,\frac{3p}{4p-3}}(\R^{3};\R^{3\times 3}) \hookrightarrow \lebe^{(\frac{3p}{4p-3})^{*}}(\R^{3};\R^{3\times 3}) = \lebe^{p'}(\R^{3};\R^{3\times 3}). 
\end{align}
Therefore, by H\"{o}lder's inequality, 
\begin{align}\label{eq:symcurlest}
\begin{split}
\|\curl(F)\|_{\sobo^{-1,\frac{3p}{3-p}}(Q)} & =  \sup_{\substack{\varphi\in\hold_{c}^{\infty}(Q;\R^{3\times 3})\\ \|\nabla\varphi\|_{\lebe^{\frac{3p}{4p-3}}(Q;\R^{3\times 3})}\leq 1}} \int_{Q}\curl(F)\cdot\varphi\dif x \\ 
& \leq \sup_{\substack{\varphi\in\hold_{c}^{\infty}(Q;\R^{3\times 3})\\ \|\nabla\varphi\|_{\lebe^{\frac{3p}{4p-3}}(Q;\R^{3\times 3})}\leq 1}} \|\curl(F)\|_{\lebe^{p}(Q)}\|\varphi\|_{\lebe^{p'}(Q)} \\ 
& \!\!\!\stackrel{\eqref{eq:auxilemb}}{\leq} c \|\curl(F)\|_{\lebe^{p}(Q)}. 
\end{split}
\end{align}
Now, combining \eqref{eq:symcurlpreliminary} and \eqref{eq:symcurlest} with $q=\frac{3p}{3-p}$, we obtain Theorem~\ref{thm:main1}~\ref{item:main1A} for $1<p<3$ by virtue of \eqref{eq:FIX1}. To arrive at Theorem~\ref{thm:main1}~\ref{item:main1B} for $1<p<3$, we argue analogously but now using \eqref{eq:devsymcurlpreliminary}.

\emph{Case $p=1$.} We only have to establish \eqref{eq:symcurlest} for $p=1$. We apply Proposition~\ref{prop:BBlocal} to $\Phi^{i}:=\curl(F^{i})$ so that $\di(\Phi^{i})=0$ for $i\in\{1,2,3\}$. Therefore, 
\begin{align*}
\|\curl(F)\|_{\sobo^{-1,\frac{3}{2}}(Q)} & = \sup_{\substack{\varphi\in\hold_{c}^{\infty}(Q;\R^{3\times 3})\\ \|\nabla\varphi\|_{\lebe^{3}(Q;\R^{3\times 3})}\leq 1}}\int_{Q}\langle\curl(F),\varphi\rangle\dif x \\ 
& = \sum_{i\in\{1,2,3\}}\sup_{\substack{\varphi\in\hold_{c}^{\infty}(Q;\R^{3\times 3})\\ \|\nabla\varphi\|_{\lebe^{3}(Q;\R^{3\times 3})}\leq 1}}\int_{Q}\langle \curl(F^{i}),\varphi^{i} \rangle\dif x \\ 
& \leq c\sup_{\substack{\varphi\in\hold_{c}^{\infty}(Q;\R^{3\times 3})\\ \|\nabla\varphi\|_{\lebe^{3}(Q;\R^{3\times 3})}\leq 1}}\Big(\sum_{i\in\{1,2,3\}}\|\curl(F^{i})\|_{\lebe^{1}(Q)}\Big)\|\nabla\varphi\|_{\lebe^{3}(Q)} \\ 
& \leq c\|\curl(F)\|_{\lebe^{1}(Q)}.
\end{align*}
The proof is hereby complete. 
\end{proof}
\begin{remark}
If $1<p<3$, then the above proof shows that we may replace the unit cube $Q$ by any open and bounded, simply connected domain $\Omega$ with Lipschitz boundary. 
\end{remark}
We conclude the paper by addressing possible generalisations of Theorem~\ref{thm:main1}:
\begin{openquestion}[On more general operators and domains]\label{OQ:1}
\noindent
(a) Following the argument of \cite{Neff3,Neff4} (in particular \cite[Cor.~2.3]{Neff3}), if $\A$ is an operator of the form \eqref{eq:form} with $\R^{N}=\R^{9}\cong\R^{3\times 3}$, the Ne\v{c}as-Lions lemma (\emph{cf.} \cite[Thm.~1]{Necas}) can be utilised to derive the respective variant of \eqref{eq:KornIIA} or \eqref{eq:KornIIB} provided $\dim(\ker(\A))<\infty$ and there exists $m\in\mathbb{N}_{\geq 1}$ and a linear map $\mathcal{L}\colon\odot^{m-1}(\R^{3};\R^{3\times 3})\to\odot^{m}(\R^{3};\R^{3\times 3})$ such that 
\begin{align}\label{eq:Celliptic}\tag{$*$}
D^{m}F=\mathcal{L}(D^{m-1}\curl(F))\qquad\text{for all}\;F\in\hold^{m}(Q;(\mathrm{Id}-A)(\R^{3\times 3})). 
\end{align}
This suggests that Theorem~\ref{thm:main1} should be generalisable to the class of $\mathbb{C}$-elliptic differential operators (\emph{cf.}~\cite{Smith,Kalamajska,BDG}) as the finite dimensionality of the nullspace is the characterising feature of such differential operators, but it is not clear to us how to establish \eqref{eq:Celliptic} for this class of operators.\\
(b) If $\Omega\subset\R^{3}$ is an open, bounded, simply connected Lipschitz domain, then estimates \eqref{eq:symcurlpreliminary} and \eqref{eq:devsymcurlpreliminary} persist. To obtain Theorem~\ref{thm:main1}, the above approach works analogously provided one can establish an extension operator $\mathcal{E}\colon \hold(\overline{\Omega};\R^{n})\cap\hold^{1}(\Omega;\R^{n})\to \lebe^{1}(U;\R^{n})$, where $U\subset\R^{n}$ is open with $\Omega\Subset U$, such that 
\begin{align}\label{eq:extension}\tag{$**$}
\begin{cases} 
(\mathcal{E}\Phi)|_{\Omega}=\Phi, \\ 
\mathrm{div}(\Phi)=0\;\text{in}\;\Omega\Longrightarrow \mathrm{div}(\mathcal{E}\Phi)=0\;\text{in}\;\mathscr{D}'(U),\\ 
\|\mathcal{E}\Phi\|_{\lebe^{1}(U)}\leq c\|\Phi\|_{\lebe^{1}(\Omega)}
\end{cases} 
\end{align}
for some $c>0$ and all $\Phi\in \hold(\overline{\Omega};\R^{n})\cap\hold^{1}(\Omega;\R^{n})$. Note that the usual extension techniques hinging on localisation by means of partitions of unity (\emph{cf.} \cite[Chpt.~4.4]{EG})  destroy the solenoidality of the extensions. In consequence, it would be of interest to know whether any open, bounded and simply connected Lipschitz domain $\Omega\subset\R^{3}$ admits an extension operator $\mathcal{E}$ satisfying \eqref{eq:extension} for some open set $U$ with $\Omega\Subset U$. 
\end{openquestion}

\section*{Appendix}
Although the following extension result underlying the proof of Proposition~\ref{prop:BBlocal} should be well-known to the experts, it is hard to be traced back in the literature and so we state and give the quick proof here:
\begin{lemma}\label{lem:extendingdivfree}
There exists a linear extension operator $\mathcal{E}\colon\hold(\overline{(0,1)^{n}};\R^{n})\cap\hold^{1}((0,1)^{n};\R^{n})\to\lebe^{1}((-1,2)^{n};\R^{n})$ such that the following hold for all $\Phi\in\hold(\overline{(0,1)^{n}};\R^{n})\cap\hold^{1}((0,1)^{n};\R^{n})$:
\begin{align}\label{eq:extension1}
\begin{cases} 
(\mathcal{E}\Phi)|_{(0,1)^{n}}=\Phi, \\ 
\mathrm{div}(\Phi)=0\;\text{in}\;(0,1)^{n}\Longrightarrow \mathrm{div}(\mathcal{E}\Phi)=0\;\text{in}\;\mathscr{D}'((-1,2)^{n}),\\ 
\|\mathcal{E}\Phi\|_{\lebe^{1}((-1,2)^{n})}\leq 3^{n}\|\Phi\|_{\lebe^{1}((0,1)^{n})}.
\end{cases} 
\end{align}
\end{lemma}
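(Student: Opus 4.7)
The plan is to build $\mathcal{E}$ by a systematic reflection across the faces of $(0,1)^n$, using the classical trick for divergence-free fields: the component normal to a reflecting face is extended \emph{evenly}, and each tangential component is extended \emph{oddly}. This ensures that the normal component of $\mathcal{E}\Phi$ is continuous across every internal interface of the extension (so no singular measure appears in the distributional divergence), while divergence-freeness is automatically preserved inside each reflected copy of the cube.

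Explicitly, I would partition $(-1,2)^n$ into the $3^n$ subcubes $Q_k := (0,1)^n + k$ with $k\in\{-1,0,1\}^n$. For $y \in Q_k$, set
\[
x_i := \begin{cases} y_i & \text{if } y_i \in [0,1],\\ -y_i & \text{if } y_i \in [-1,0],\\ 2-y_i & \text{if } y_i \in [1,2], \end{cases}
\qquad \tau_i(y_i) := \begin{cases} +1 & \text{if } y_i \in [0,1],\\ -1 & \text{otherwise}, \end{cases}
\]
so that $x\in[0,1]^n$, and define
\[
(\mathcal{E}\Phi)_i(y) := \Big(\prod_{j\neq i}\tau_j(y_j)\Big)\,\Phi_i(x).
\]
The identity $\mathcal{E}\Phi|_{(0,1)^n}=\Phi$ is immediate from $\tau_j\equiv 1$ and $x=y$ on $Q_0$, and the $\lebe^{1}$-bound follows from the pointwise equality $|\mathcal{E}\Phi(y)|=|\Phi(x)|$ together with the fact that $y\mapsto x$ is measure-preserving on each $Q_k$, yielding $\|\mathcal{E}\Phi\|_{\lebe^{1}((-1,2)^n)} = 3^n\|\Phi\|_{\lebe^{1}((0,1)^n)}$.

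The main point is $\operatorname{div}(\mathcal{E}\Phi) = 0$ in $\mathscr{D}'((-1,2)^n)$. Inside each open $Q_k$ the chain rule yields $\partial_{y_i}(\mathcal{E}\Phi)_i(y) = \big(\prod_j\tau_j(y_j)\big)(\partial_{x_i}\Phi_i)(x)$, the reflection supplying the missing $\tau_i$ factor, and summing over $i$ gives $\operatorname{div}(\mathcal{E}\Phi)(y) = \big(\prod_j\tau_j(y_j)\big)\operatorname{div}(\Phi)(x)=0$. To promote this to the distributional identity on all of $(-1,2)^n$, I would test against $\varphi\in\hold_{c}^{\infty}((-1,2)^n)$, split $\int\langle \mathcal{E}\Phi,\nabla\varphi\rangle\,\mathrm{d}y$ over the $3^n$ subcubes, and apply the divergence theorem on each $Q_k$ (justified by exhausting $Q_k$ by strictly interior subcubes and using $\Phi\in\hold(\overline{(0,1)^n})$ to pass to the limit). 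The bulk divergence terms vanish. Each internal interface $\{y_i = 0\}$ or $\{y_i = 1\}$ receives two surface contributions from its two adjacent subcubes with opposite outward normals; the prefactor $\prod_{j\neq i}\tau_j(y_j)$ depends only on coordinates other than $y_i$ and is therefore identical on both sides, while $\Phi_i(x)$ is continuous across the interface because $x_i$ is and $\Phi$ is continuous up to $\partial(0,1)^n$. Hence these surface terms cancel pairwise, and contributions on $\partial(-1,2)^n$ vanish because $\varphi$ does.

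The only genuine obstacle is the sign bookkeeping: verifying that the product $\prod_{j\neq i}\tau_j$ indeed renders the $i$-th (normal) component continuous across every face $\{y_i=\text{const}\}$, while the tangential components acquire only jumps along directions tangential to the interface---invisible to the distributional divergence. Everything else is a routine change of variables.
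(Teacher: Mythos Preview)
Your proposal is correct and follows essentially the same approach as the paper: both use the reflection trick in which the component normal to a face is extended evenly and the tangential components oddly, so that the normal component is continuous across every internal interface and no boundary contributions survive in the distributional divergence. The only cosmetic difference is that the paper builds $\mathcal{E}$ inductively as a composition $\mathcal{E}_n\circ\cdots\circ\mathcal{E}_1$, extending one coordinate at a time, whereas you give the closed formula $(\mathcal{E}\Phi)_i(y)=\big(\prod_{j\neq i}\tau_j(y_j)\big)\Phi_i(x)$ directly on all $3^n$ subcubes; these define the same operator.
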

\begin{proof} 
We proceed by induction over the first $k$ elements of $\{1,...,n-1\}$. Suppose that $\Psi$ is defined on $(-1,2)^{k-1}\times(0,1)^{n-k+1}$. We claim that there exists an operator $\mathcal{E}_{k}$ with $\mathcal{E}_{k}\Psi\colon (-1,2)^{k}\times (0,1)^{n-k}\to\R^{n}$ such that
\begin{align}\label{eq:extension2}
\begin{cases} 
(\mathcal{E}_{k}\Psi)|_{(-1,2)^{k-1}\times (0,1)^{n-k+1}}=\Psi, \\ 
\mathrm{div}(\Psi)=0\;\text{in}\;(-1,2)^{k-1}\times (0,1)^{n-k+1} \\ \;\;\;\;\;\;\;\;\;\;\;\;\;\;\;\;\;\;\;\;\;\;\;\;\;\Longrightarrow \mathrm{div}(\mathcal{E}_{k}\Psi)=0\;\text{in}\;\mathscr{D}'((0,1)^{k}\times (0,1)^{n-k}),\\ 
\|\mathcal{E}_{k}\Psi\|_{\lebe^{1}((-1,2)^{k}\times (0,1)^{n-k})}\leq 3\|\Psi\|_{\lebe^{1}((-1,2)^{k-1}\times(0,1)^{n-k+1})}, 
\end{cases} 
\end{align}
where we adopt the convention $(-1,2)^{0}\times (0,1)^{n}=(0,1)^{n}$. Then, by construction, $\mathcal{E}:=\mathcal{E}_{n}\circ\mathcal{E}_{n-1}\circ...\circ\mathcal{E}_{1}$ satisfies \eqref{eq:extension1}. For $k\in\{1,...,n-1\}$, define for $\Psi\colon (-1,2)^{k-1}\times (0,1)^{n-k+1}\to\R^{n}$
\begin{align*}
\mathcal{E}_{k}\Psi (x) :=\begin{cases} 
\mathcal{E}_{k}^{+}\Psi(x)&\;\text{if}\; x\in (-1,2)^{k-1}\times (1,2)\times (0,1)^{n-k}\\ 
\Psi(x) &\;\text{if}\;x\in(-1,2)^{k-1}\times (0,1)\times (0,1)^{n-k}\\ 
\mathcal{E}_{k}^{-}\Psi(x)&\;\text{if}\; x\in (-1,2)^{k-1}\times (-1,0)\times (0,1)^{n-k}, 
\end{cases} 
\end{align*}
where $\mathcal{E}_{k}^{\pm}\Psi=((\mathcal{E}_{k}^{\pm}\Psi)_{1},...,(\mathcal{E}_{k}^{\pm}\Psi)_{n})$ with 
\begin{align*}
&\begin{cases} 
(\mathcal{E}_{k}^{+}\Psi)_{j}(x_{1},...,x_{n}):= - \Psi_{j}(x_{1},...,x_{k-1},2-x_{k},x_{k+1},...,x_{n-1},x_{n})&\;\text{if}\;j\neq k,\\
(\mathcal{E}_{k}^{+}\Psi)_{k}(x_{1},...,x_{n}):=\Psi_{k}(x_{1},...,x_{k-1},2-x_{k},x_{k+1},...,x_{n})&\;\text{if}\;j=k
\end{cases}\\ & \;\;\;\;\;\;\;\;\;\;\;\;\;\;\;\;\;\;\;\;\;\;\;\;\;\;\;\;\;\;\;\;\;\;\;\;\;\;\;\;\;\;\;\;\;\;\;\;\;\;\;\;\;\;\;\;\;\;\;\;\;\;\;\;\;\;\;\;\;\;\;\;\;\;\text{for}\; x\in (1,2)^{k-1}\times(1,2)\times (0,1)^{n-k},\\
&\begin{cases} 
(\mathcal{E}_{k}^{-}\Psi)_{j}(x_{1},...,x_{n}):= - \Psi_{j}(x_{1},...,x_{k-1},-x_{k},x_{k+1},...,x_{n-1},x_{n})&\;\text{if}\;j\neq k,\\
(\mathcal{E}_{k}^{-}\Psi)_{k}(x_{1},...,x_{n}):=\Psi_{k}(x_{1},...,x_{k-1},-x_{k},x_{k+1},...,x_{n})&\;\text{if}\;j=k
\end{cases}\\
& \;\;\;\;\;\;\;\;\;\;\;\;\;\;\;\;\;\;\;\;\;\;\;\;\;\;\;\;\;\;\;\;\;\;\;\;\;\;\;\;\;\;\;\;\;\;\;\;\;\;\;\;\;\;\;\;\;\;\;\;\;\;\;\;\;\;\;\;\;\;\;\;\;\;\text{for}\; x\in (1,2)^{k-1}\times(-1,0)\times (0,1)^{n-k}.
\end{align*}
By construction,~$\eqref{eq:extension2}_{1}$ and $\eqref{eq:extension2}_{3}$ follow. Let $\varphi\in\hold_{c}^{\infty}((-1,2)^{k}\times (0,1)^{n-k})$. Since $\di(\mathcal{E}_{k}^{\pm}\Psi)=0$ on $(1,2)^{k-1}\times (1,2)\times (0,1)^{n-k}$ or $(1,2)^{k-1}\times (-1,0)\times(0,1)^{n-k}$, respectively, we have with $\nu=(\nu_{i})_{i}=(0,...,0,-1,0,...,0)$ and $\widetilde{\nu}=(\widetilde{\nu}_{i})_{i}=(0,...,0,1,0,...,0)$ (the non-zero entry sitting at the $k$-th position) 
\begin{align*}
\int_{(-1,2)^{k}\times (0,1)^{n-k}}&\langle\mathcal{E}_{k}\Psi,\nabla\varphi\rangle\dif x  = -\sum_{j\in\{1,...,n\}\setminus\{k\}}\Big(\int_{(-1,2)^{k-1}\times\{0\}\times (0,1)^{n-k}} (\mathcal{E}_{k}^{-}\Psi)_{j}\varphi\nu_{j}\dif\mathscr{H}^{n-1}\Big.\\ 
& \Big. - \int_{(-1,2)^{k-1}\times\{0\}\times (0,1)^{n-k}}\Psi_{j}\varphi\nu_{j}\dif\mathscr{H}^{n-1}\Big) \\ & -\sum_{j\in\{1,...,n\}\setminus\{k\}}\Big(\int_{(-1,2)^{k-1}\times\{1\}\times (0,1)^{n-k}} (\mathcal{E}_{k}^{+}\Psi)_{j}\varphi\widetilde{\nu}_{j}\dif\mathscr{H}^{n-1}\Big.\\ 
& \Big. - \int_{(-1,2)^{k-1}\times\{1\}\times (0,1)^{n-k}}\Psi_{j}\varphi\widetilde{\nu}_{j}\dif\mathscr{H}^{n-1}\Big) \\ & \mp\Big(\int_{(-1,2)^{k-1}\times\{0\}\times (0,1)^{n-k}} \Psi_{k}\varphi\nu_{k}\dif\mathscr{H}^{n-1}\Big.\\ 
& \Big. + \int_{(-1,2)^{k-1}\times\{1\}\times (0,1)^{n-k}}\Psi_{k}\varphi\widetilde{\nu}_{k}\dif\mathscr{H}^{n-1}\Big)= 0
\end{align*}
as only those summands with $j=k$ are potentially non-zero, and for $j=k$ the corresponding integrals cancel out. Hence $\di(\mathcal{E}_{k}\Psi)=0$ in $\mathscr{D}'((-1,2)^{k}\times(0,1)^{n-k})$. This finishes the proof. 
\end{proof}

\end{document}